\newtheorem{theorem}{Theorem}[section]
\newtheorem{lemma}[theorem]{Lemma}
\newtheorem{corollary}[theorem]{Corollary}
\newtheorem{proposition}[theorem]{Proposition}
\theoremstyle{definition}
\newtheorem{definition}[theorem]{Definition}
\newcommand{\C}{\mathbb{C}}
\newcommand{\N}{\mathbb{N}}
\newcommand{\Z}{\mathbb{Z}}
\newcommand{\bthm}{\begin{theorem}}
\newcommand{\ethm}{\end{theorem}}
\newcommand{\blem}{\begin{lemma}}
\newcommand{\elem}{\end{lemma}}
\newcommand{\bcor}{\begin{corollary}}
\newcommand{\ecor}{\end{corollary}}
\newcommand{\bprop}{\begin{proposition}}
\newcommand{\eprop}{\end{proposition}}
\newcommand{\bdefn}{\begin{definition}}
\newcommand{\edefn}{\end{definition}}
\newcommand{\bpf}{\begin{proof}}
\newcommand{\epf}{\end{proof}}
\def\vep {\varepsilon}
\def \sm {\setminus}
\def\h#1{\widehat {#1}}
\def\tA {\tilde A}
\def\itemskip {\vskip 3pt plus 2 pt minus 1 pt}
\def \ma {\mathfrak{M}_A} 
\def \maf {\mathfrak{M}_{A_\sf}} 
\def \mb {\mathfrak{M}_B} 
\def \mad {\mathfrak{M}_{A^D}}
\def\sf{{\mathscr F}}
\def\der{(d^{(k)})_{k=0}^n}
\def\Der{(D^{(k)})_{k=0}^n}
\def\dernum #1{(d_{#1}^{(k)})_{k=0}^n}
\def\dnonum {d^{(k)}}
\def\Dnonum {D^{(k)}}
\def\dnum #1{d_{#1}^{(k)}}
\def\dtnum #1{\tilde d_{#1}^{(k)}}
\def\qstar #1{q_{#1}^*}
\def\Asf{A_\sf}
\def\xcf{X\times\C^\sf}
\def\aa{A_\alpha}
\def\xa{X_\alpha}
\def\piab{\pi_{\alpha,\beta}}
\def\fa{\sf_\alpha}
\newcommand{\ra}{\rightarrow}
\newcommand{\ol}{\overline}
\begin{document}
\title[One-point Gleason parts and point derivations]{One-point Gleason parts and\\ point derivations in uniform algebras}

\author{Swarup N. Ghosh}
\address{Department of  Mathematics,
Southwestern Oklahoma State University,
Weatherford, OK 73096, USA}
\email{swarup.ghosh@swosu.edu}

\author{Alexander J. Izzo}
\address{Department of Mathematics and Statistics, Bowling Green State University, Bowling Green, OH 43403}
\email{aizzo@bgsu.edu}
\thanks{The second author was partially supported by a Simons collaboration grant and by NSF Grant DMS-1856010.}

\subjclass[2010]{Primary 46J10}
\keywords{uniform algebra, Gleason part, point derivation, root extension}

\begin{abstract}
It is shown that a uniform algebra can have a nonzero bounded point derivation while having no nontrivial Gleason parts.  Conversely, a uniform algebra can have a nontrivial Gleason part while having no nonzero, even possibly unbounded, point derivations.
\end{abstract}

\maketitle

%
%
%
%

\section{Introduction}

Let
$X$ be a compact Hausdorff space, and let $C(X)$ be the algebra of all continuous complex-valued functions on $X$ with the supremum norm
$ \|f\| = \sup\{ |f(x)| : x \in X \}$.  A \emph{uniform algebra} $A$ on $X$ is a closed subalgebra of $C(X)$ that contains the constant functions and separates
the points of $X$.  There is a general feeling that a uniform algebra $A$ on $X$ either is $C(X)$ or else there 
is a subset of the maximal ideal space of $A$ that can be given the structure of a complex manifold on which the functions in $A$ are holomorphic.
However, it is well known that this feeling is not completely correct. 
One is therefore led to consider weaker forms of analytic structure.  Perhaps the two most common of these are nonzero point derivations and nontrivial Gleason parts.  Thus the question arises as to how these two weak forms of analytic structure are related.  More precisely, does the presence of one of these two weak forms of analytic structure imply the presence of the other?  The main purpose of this paper is to show that the answer is \emph{no}: Either form can be present in the absence of the other.

\begin{theorem}\label{main-theorem-d1}
There exists a uniform algebra $B$ on a compact metrizable space such that there exists a nonzero bounded point derivation on $B$ but $B$ has no nontrivial Gleason parts.
\end{theorem}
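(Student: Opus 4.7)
The plan is to begin with a uniform algebra that manifestly carries a nonzero bounded point derivation---the disk algebra $A(\oD)$ with the derivation $f \mapsto f'(0)$ at the character $\phi_0$ of evaluation at the origin is the obvious choice---and then to enlarge it by Cole's method of iterated root extensions so as to annihilate all nontrivial Gleason parts while preserving the derivation. Cole's construction takes a uniform algebra $A$ on a compact space $X$ and a chosen $f \in A$, and produces a larger uniform algebra $\widetilde A$ on a compact space $\widetilde X$ together with a continuous surjection $\pi \colon \widetilde X \to X$ such that $g \mapsto g \circ \pi$ is an isometric embedding $A \hookrightarrow \widetilde A$ and $f \circ \pi$ acquires a square root in $\widetilde A$; moreover, every character of $A$ lifts to at least one character of $\widetilde A$, and the fibers of the restriction map between maximal ideal spaces have a transparent description.

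I would iterate this construction countably many times to form a tower $A_0 \subset A_1 \subset A_2 \subset \cdots$ over an inverse system $X_0 \leftarrow X_1 \leftarrow X_2 \leftarrow \cdots$, adjoining at stage $n$ square roots of a countable set of elements chosen by diagonalization so that the limit algebra $B := \overline{\bigcup_n A_n}$, sitting in $C(\widetilde X)$ with $\widetilde X = \varprojlim X_n$, contains a square root of every one of its elements. Because each extension adjoins only countably many new complex coordinates to an already metrizable space, $\widetilde X$ remains compact and metrizable. Triviality of every Gleason part of $B$ then follows from a standard square-root argument: given two distinct characters, one picks an element $f_0$ separating them, takes iterated square roots, and normalizes to produce unit-norm functions whose values at the two characters differ by amounts approaching the maximal spread $2$.

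Simultaneously the derivation must be lifted. At each stage, whenever $g \in A_{n+1}$ satisfies $g^2 = f$ for some $f \in A_n$, the Leibniz rule forces $2\phi_n(g) D_{n+1}(g) = D_n(f)$, which determines $D_{n+1}(g)$ whenever $\phi_n(g) \ne 0$ and is consistent with any choice of $D_{n+1}(g)$ when $\phi_n(g) = 0$ (in which case necessarily $D_n(f) = 0$). By selecting the lift $\phi_n$ above $\phi_0$ so as to keep $\phi_n(g)$ nonzero on the newly adjoined roots whenever possible, and by assigning $D_{n+1}$ to vanish on the freely choosable roots, one obtains bounded derivations $D_n$ extending $D_0$ with uniformly controlled norm, which patch together to a nonzero bounded point derivation on $B$ at the character extending $\phi_0$.

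The main obstacle is balancing competing demands: metrizability pushes toward adjoining only countably many functions at each stage, yet triviality of every Gleason part of $B$ requires that \emph{every} element of the limit algebra, including those arising only as uniform limits from arbitrarily high stages, possess a square root in $B$. A diagonal scheme that enumerates a dense subset of $\bigcup_n A_n$ across stages is needed to reconcile these demands. The secondary difficulty of controlling the norm growth of $(D_n)$ through the tower rests on making the free choices in the definition of $D_{n+1}$ as conservatively as possible, and I expect this to be the most delicate technical point.
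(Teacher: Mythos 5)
Your proposal correctly identifies the starting point (the disc algebra), correctly identifies Cole's iterated root extensions as the mechanism, and correctly recognizes that one must balance metrizability (countably many roots per stage) against destroying Gleason parts. However, the plan contains a gap that is not merely technical but structural, and it is obscured by a false assertion in the derivation-lifting discussion.

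You write that when $g^2 = f$ and $\phi_n(g) = 0$, ``necessarily $D_n(f) = 0$.'' This does not follow. If $f \in A_n$ lies in $M_{\phi_n}$ but $D_n(f) \neq 0$, then any square root $g$ of $f$ in $A_{n+1}$ must have $\phi_{n+1}(g) = 0$, and the Leibniz rule then forces $D_{n+1}(f) = D_{n+1}(g^2) = 2\phi_{n+1}(g)D_{n+1}(g) = 0$; this contradicts the requirement $D_{n+1}|_{A_n} = D_n$. In other words, you simply \emph{cannot} adjoin a square root of any $f$ with $\phi(f) = 0$ but $D(f) \neq 0$ while preserving $D$. Consequently, your diagonalization scheme---which aims to give every element of $B$ a square root in $B$---is impossible from the outset: the limit algebra $B$ will necessarily contain, in $M_\phi$, functions with nonzero $D$-value that cannot have square roots. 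Your proposed ``standard square-root argument'' for killing Gleason parts (pick an $f_0$ separating two characters, take iterated square roots of $f_0$) therefore breaks down, since $f_0$ may be one of the forbidden functions.

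What the paper does instead is more selective and more subtle. It adjoins square roots only of functions in $\ker d$ (which is an ideal contained in $M_x$), so the above obstruction never arises; the lifted derivation is defined cleanly and canonically via $D^{(k)} = d^{(k)} \circ T$, where $T$ is Cole's fiber-averaging projection $A_\sf \to A$, and the fact that this is again a bounded point derivation of the same norm is verified directly (Lemma~\ref{key-lemma}), with no free choices to calibrate. The step your proposal is really missing is Lemma~\ref{lemma}: if $J$ is a \emph{primary} ideal (contained in a unique $M_x$) such that $\{f^2 : f \in J\}$ is dense in $J$, then \emph{every} Gleason part is trivial. The proof exploits that for any $y \neq x$, the ideal $J$ is not contained in $M_y$, so one can find $f \in J$ not vanishing at $y$ but vanishing at any other prescribed point $z$, and the iterated-square-root argument run on this $f$ separates $y$ from $z$; this kills all parts, not just those of points where $J$ fails to vanish. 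In short: you do not need square roots of all elements, and you cannot have them; you need them (approximately) only for a primary ideal disjoint from the support of the derivation, and that weaker requirement is exactly achievable and exactly enough.
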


\begin{theorem}\label{main-theorem-p1}
There exists a uniform algebra $B$ on a compact Hausdorff space such that $B$ has a nontrivial Gleason part but there are no nonzero point derivations on $B$. \end{theorem}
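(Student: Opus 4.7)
The plan is to construct $B$ as an iterated Cole square-root extension of a uniform algebra that already has a nontrivial Gleason part, arranged so that the iteration kills every point derivation while at least one nontrivial part survives. Take $A_0$ to be the disk algebra on $X_0 = \overline\D$, whose maximal ideal space is $\overline\D$ and whose open disk forms a single nontrivial Gleason part. Fix distinct points $\phi_0 = 0$ and $\psi_0 = 1/2$ in this part; the goal is to engineer lifts of these two points to a common Gleason part of $B$.

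Recall Cole's construction: given a uniform algebra $A$ on $X$ and a set $\sf \subseteq A$, form
\[
X_\sf = \bigl\{ (x, (w_f)_{f \in \sf}) \in X \times \C^\sf : w_f^2 = f(x) \text{ for every } f \in \sf \bigr\},
\]
and let $A_\sf$ be the uniform algebra on $X_\sf$ generated by the pullbacks of functions in $A$ together with the coordinate functions $w_f$; each $f \in \sf$ thereby acquires the square root $w_f$ in $A_\sf$. Iteratively define $(A_\alpha, X_\alpha)$ for $\alpha \le \omega_1$, taking the Cole extension with $\sf = A_\alpha$ at successors and appropriate inverse/direct limits at limit ordinals. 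Set $B = A_{\omega_1}$. By regularity of $\omega_1$, every $f \in B$ lies in some $A_\alpha$ with $\alpha < \omega_1$, and so has a square root in $A_{\alpha+1} \subseteq B$. This at once kills all point derivations: for any $\psi \in \mathfrak{M}_B$ and any (not necessarily bounded) point derivation $D$ at $\psi$, given $f \in B$ with $f(\psi) = 0$ pick $g \in B$ with $g^2 = f$; then $g(\psi)^2 = 0$, so $g(\psi) = 0$, and $D(f) = D(g^2) = 2 g(\psi) D(g) = 0$. Hence $D$ vanishes on the maximal ideal at $\psi$ and therefore on all of $B$.

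The main obstacle is to show that $\phi_0$ and $\psi_0$ lift to characters $\tilde\phi_0, \tilde\psi_0 \in \mathfrak{M}_B$ lying in a common Gleason part. The natural approach is a transfinite-induction preservation lemma: if $\phi, \psi \in \mathfrak{M}_{A_\alpha}$ lie in a common Gleason part, then compatible lifts to $\mathfrak{M}_{A_{\alpha+1}}$ can be chosen, still in a common part. At a successor step, one would start from a representing measure for $\phi$ on $X_\alpha$ mutually absolutely continuous with one for $\psi$ with bounded Radon--Nikodym derivative, lift these measures to $X_{\alpha+1}$ via a compatible choice of branches of the newly adjoined square roots, and check that the lifted measures still witness a Gleason distance strictly less than $2$ between the lifted characters. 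At a limit ordinal, consistent choices at earlier stages should extend to a pair of characters on the limit algebra, and the bounded Radon--Nikodym relation ought to pass to the limit by a compactness argument. The delicacy here is the whole point of the result: Cole's original application of precisely this iteration produces essential uniform algebras in which every point is a peak point and hence every Gleason part is trivial, so a naive execution annihilates all parts. The construction must therefore either restrict which square roots are adjoined at each stage, or make very carefully compatible choices of lifts, so that $\phi_0$ and $\psi_0$ remain in a common nontrivial part through the entire transfinite iteration. Striking this balance is the heart of the proof.
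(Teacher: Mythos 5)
Your high-level instinct (a transfinite Cole iteration) matches the paper's strategy, but the proposal leaves the single decisive ingredient unsupplied, and the auxiliary steps you sketch would not go through as written.

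The critical gap is the choice of $\sf_\alpha$. You take $\sf_\alpha = A_\alpha$ at each successor stage, which, as you yourself note, is exactly Cole's original construction and trivializes \emph{all} Gleason parts; you then flag that one must ``restrict which square roots are adjoined'' or ``make very carefully compatible choices of lifts'' and call this the heart of the proof—and then stop. The paper's resolution is precise and simple: take $\sf_\alpha = M_{x_\alpha^{(1)}} \cap \cdots \cap M_{x_\alpha^{(n)}}$, i.e.\ adjoin square roots only of functions that vanish simultaneously at all $n$ distinguished points. This has two immediate consequences you don't get with $\sf_\alpha = A_\alpha$: (a) the fiber over each $x_\alpha^{(j)}$ is a single point, so the ``lifts'' of the distinguished points are canonical and there is no need for your representing-measure / Radon--Nikodym machinery (which is also far harder to push through a transfinite limit than you suggest); and (b) the survival of the Gleason part becomes a one-line estimate using the norm-one retraction $T_\Omega : A_\Omega \to A_0$, namely $\|x_j - x_k\|_{A_\Omega^*} \le \|x_0^{(j)} - x_0^{(k)}\|_{A_0^*} < 2$.

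With the restricted $\sf_\alpha$, your ``every element has a square root, hence $D(f)=D(g^2)=0$'' argument no longer applies directly, and two separate arguments are needed for absence of point derivations. At the distinguished points $y \in P$, one takes $f \in M_y$, interpolates $g \in \tilde A$ with $g(x_j)^2 = f(x_j)$ for all $j$, observes $f - g^2 \in \sf$ so $f - g^2 = h^2$ with $h \in \sf \subseteq M_y$, and factors $f = (g+ih)(g-ih) \in M_y^2$. At points off $P$, the paper shows (via the Feinstein--Heath bounded-relative-units criterion) that each such point is a generalized peak point, which forces zero point derivations; this step requires the starting algebra to be \emph{normal} (the paper uses McKissick's algebra), so the disk algebra is not a suitable $A_0$ and your claim that the disk algebra is fine is another defect. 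Finally, note that even if you restrict the square roots, at a limit ordinal you must verify $\sf$ is closed under taking square roots—this is automatic for $\Omega$ the first uncountable ordinal because the intersection of the maximal ideals is the increasing union $\bigcup_{\alpha<\Omega}\sf_\alpha$; your use of $\omega_1$ shows awareness of the cofinality argument, but you apply it to the wrong set of functions.
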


Theorem~\ref{main-theorem-p1} and its proof below were found by Garth Dales and Joel Feinstein in response to a question posed to them by the second author.  We thank Dales and Feinstein for allowing us to present their result in our paper so that the complementary Theorems~\ref{main-theorem-d1} and~\ref{main-theorem-p1} appear together in a single paper.

Note that the algebra in Theorem~\ref{main-theorem-d1} has a nonzero \emph{bounded} point derivation while the algebra in Theorem~\ref{main-theorem-p1} not only has no nonzero bounded point derivations but moreover has no nonzero, possibly unbounded, point derivations.  In fact, a uniform algebra with a nontrivial Gleason part but no nonzero \emph{bounded} point derivations was constructed by John Wermer long ago \cite{W1}.  Wermer's example is $R(K)$ for a certain compact planar set $K$.  Such a uniform algebra necessarily has nonzero unbounded point derivations \cite[Corollary~3.3.11 and Theorem~3.3.3]{Browder}.

Obviously Theorems~\ref{main-theorem-d1} and~\ref{main-theorem-p1} contain the weaker statement that \emph{at a particular point} $x$ of the maximal ideal space of a uniform algebra the condition that there is a nonzero point derivation at $x$ and the condition that $x$ lie in a nontrivial Gleason part are independent of each other.  The statement that there need not be a nonzero point derivation at a point in a nontrivial Gleason part seems to be new.  That a point at which there is a nonzero bounded point derivation need not belong to a nontrivial Gleason part was shown by Stuart Sidney long ago \cite[Example~5.13]{Sidney}.  However, in contrast to the algebra in Theorem~\ref{main-theorem-d1}, Sidney's uniform algebra does have nontrivial Gleason parts, and in fact, its maximal ideal space contains many analytic discs.

Theorem~\ref{main-theorem-p1} should be contrasted with the theorem of Andrew Browder \cite{Browder2} (see also \cite[Theorem~1.6.2]{Browder}) that if a point $x$ of the maximal ideal space $\mb$ of a uniform algebra $B$ is non-isolated in the metric topology on $\mb$, then there must be a nonzero (possibly unbounded) point derivation at $x$.

We will show that the uniform algebras in Theorems~\ref{main-theorem-d1} and~\ref{main-theorem-p1} can be taken to satisfy additional conditions. Specifically we will prove the following two results that contain Theorems~\ref{main-theorem-d1} and~\ref{main-theorem-p1}.

\begin{theorem}\label{main-theorem-d2}
There exists a normal uniform algebra $B$ on a compact metrizable space $X$ and a point $x\in X$ such that $B$ has a nondegenerate bounded point derivation  of infinite order at $x$ and $B$ has bounded relative units at every point of $X\sm\{x\}$.
\end{theorem}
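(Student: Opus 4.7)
The plan is to construct $B$ as the limit of a transfinite tower of uniform algebras produced by iterated Cole-type square-root adjunctions, starting from a base algebra that already carries a nondegenerate bounded point derivation of infinite order at a distinguished point. The notation set up in the preamble ($\Asf$, $\xcf$, $\der$, $\Der$, $\qstar{f}$, $\aa$, $\xa$, $\piab$, $\fa$) strongly suggests this is the intended framework.

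First I would take $A_0$ to be the disc algebra on the closed unit disc $\od$ with distinguished point $x_0=0$; this immediately carries a nondegenerate bounded point derivation of infinite order at $x_0$ given by the normalized Taylor coefficients $\dnonum(f)=f^{(k)}(0)/k!$. At this stage $A_0$ is not normal and lacks bounded relative units at the interior points of $\od\sm\{0\}$, so the point of the remaining construction is to destroy Gleason structure everywhere off $x_0$ while preserving the derivation at $x_0$.

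I would then iterate Cole root extensions through countably many stages. A standard bookkeeping argument enumerates all test pairs $(y,U)$ with $y\in\xa\sm\{x_\alpha\}$ and $U$ a basic open neighborhood of $y$ from a countable base for $\xa$ (which is compact metric). At stage $\alpha+1$, pick the next unhandled pair and choose a countable family $\fa\subseteq\aa$ of functions vanishing at $y$ whose adjoined square roots, combined with suitable cut-offs in $\aa$, produce a bounded relative unit at $y$ supported in $U$ inside $A_{\alpha+1}=(\aa)_{\fa}$, which lives on $X_{\alpha+1}=\xa\times\C^{\fa}$. The functions in $\fa$ are taken either to vanish to order at least two at $x_\alpha$ or to be bounded away from zero there. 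At limit stages take the direct limit of algebras on the inverse limit of spaces; compactness and metrizability are preserved because each $\fa$ is countable. A standard Cole-type argument shows that normality and previously-obtained bounded relative units persist under further adjunctions, since any relative unit pulls back via $\piab$ with the same norm.

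The main obstacle is the lifting lemma for derivations at each Cole step: given a bounded point derivation $\der$ of infinite order on $\aa$ at $x_\alpha$, one must produce a bounded lift $\Der$ of infinite order on $A_{\alpha+1}$ at the natural preimage $x_{\alpha+1}$. Since $A_{\alpha+1}$ is generated over $\aa$ by the adjoined roots $\qstar{f}$, the values $\Dnonum(\qstar{f})$ are forced by the higher-order Leibniz rule applied to $(\qstar{f})^2=f$, yielding for each $k$ the recursion
\[
\sum_{j=0}^{k} D^{(j)}(\qstar{f})\,D^{(k-j)}(\qstar{f}) = \dnonum(f).
\]
For each $f\in\fa$, either $f$ vanishes to order at least two at $x_\alpha$ so that $\qstar{f}(x_\alpha)=0$ and one determines $\Dnonum(\qstar{f})$ inductively via Fa\`a di Bruno applied to $\sqrt{\cdot}$, or $f$ is bounded away from zero at $x_\alpha$ and the recursion determines each $\Dnonum(\qstar{f})$ explicitly. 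In either case the lift is bounded and of infinite order, and restricts to $\der$ on $\aa$. Once this preservation lemma is in place, passing to the limit yields a normal uniform algebra $B$ on a compact metrizable $X$ with bounded relative units at every point of $X\sm\{x\}$ and a nondegenerate bounded point derivation of infinite order at $x$, as required.
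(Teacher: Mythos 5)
Your proposal has the right general shape (iterated Cole root extensions, a lifting lemma for the derivation, Feinstein--Heath for bounded relative units), but two steps go wrong in ways that are not cosmetic.

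\textbf{The starting algebra.} You begin with the disc algebra and hope the root--extension machinery will ``destroy Gleason structure everywhere off $x_0$'' and eventually yield normality and bounded relative units. It won't. Root extensions preserve normality (Lemmas~\ref{preserve-normal1} and~\ref{preserve-normal2}) but do not create it: to invoke Lemma~\ref{F-H} you need, already at some finite stage, functions that are identically $1$ on a neighbourhood of $y$ and identically $0$ on a compact set missing $y$, and no amount of adjoining square roots to disc--algebra functions produces such locally constant cutoffs. Cole's original construction from the disc algebra gives a nontrivial algebra in which every point is a peak point, but that algebra is not normal and does not have bounded relative units. The paper instead starts from O'Farrell's example~\cite{OF}: a \emph{normal} uniform algebra that already carries a nondegenerate bounded point derivation of infinite order. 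Normality is an input, not an output, of the construction.

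\textbf{Lifting the derivation.} You propose to extend $d$ to the root extension by solving the order-$k$ Leibniz recursion on the generators $p_f$ (the relation $p_f^2=\pi^*f$), splitting into the cases $f$ vanishing to order $\ge 2$ at the base point or $f$ bounded away from zero. This cannot work as stated. First, $A_\sf$ is not freely generated over $A$ by the $p_f$: the monomials $p_{f_1}\cdots p_{f_r}$ for $r\ge 2$ are new elements subject only to the square relations, so prescribing $D^{(k)}$ on each $p_f$ does not determine a linear functional on $A_\sf$, let alone a multiplicative-Leibniz one. Second, the recursion forces expressions such as $D^{(1)}(p_f)=\pm\sqrt{d^{(2)}f}$, and $f\mapsto\sqrt{d^{(2)}f}$ is neither linear nor bounded, so there is no reason the resulting functionals are bounded or that the norms stay controlled as you iterate to the limit. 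The paper sidesteps all of this with a single clean idea, Lemma~\ref{key-lemma}: adjoin roots only of functions in $\ker d$. Since $\ker d$ is a primary local ideal (Lemma~\ref{localness}, using normality), there are enough such functions to run the Feinstein--Heath argument, and the conditional expectation $T$ of Lemma~\ref{key-trick} kills every monomial involving a $p_f$, so $D^{(k)}:=d^{(k)}\circ T$ is automatically a bounded point derivation with $\|D^{(k)}\|=\|d^{(k)}\|$. That norm equality is exactly what lets the derivation pass to the direct limit $A_\omega$.

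In short: replace the disc algebra by O'Farrell's normal algebra with a bounded infinite-order point derivation, and replace the ad hoc Leibniz recursion by the restriction $\sf\subset\ker d$ together with $D^{(k)}=d^{(k)}\circ T$; then the rest of your outline (countable iteration, inverse limit, Feinstein--Heath) matches the paper's proof.
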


\begin{theorem}\label{main-theorem-p2}
For each integer $n\geq2$, there exists a strongly regular uniform algebra
$B$ on a compact Hausdorff space $X$ such that $B$ has a Gleason part $P$ that has exactly $n$ elements, $B$ has bounded relative units at every point of $X\sm P$, and there are no nonzero point derivations on $B$. 
\end{theorem}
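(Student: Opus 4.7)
The plan is to combine an explicit construction of a base uniform algebra that already has an $n$-element Gleason part with an iterated Cole-type root-adjunction procedure that kills all nonzero point derivations.  The strategy breaks into three steps: build the base algebra, iterate Cole root extensions, and transfer the desired properties to the limit.

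First I would construct a uniform algebra $A_{0}$ on a compact Hausdorff space $X_{0}$ satisfying (i) a single nontrivial Gleason part $P_{0}=\{p_{1},\dots,p_{n}\}$ of exactly $n$ elements, (ii) bounded relative units at every point of $X_{0}\setminus P_{0}$, and (iii) strong regularity at every point of $X_{0}\setminus P_{0}$.  The model construction is to attach $n$ distinguished points to a normal, strongly regular ambient algebra in such a way that the values of each $f\in A_{0}$ at the $n$ points are coupled by a nontrivial linear relation, ensuring the $p_{i}$ all lie in a common Gleason part, while the ambient algebra is rich enough to separate each $p_{i}$ from every other point of $X_{0}$ in the Gleason metric.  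Strong regularity and bounded relative units off $P_{0}$ are built in by choice of the ambient algebra.

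Next, I would apply the iterated Cole root-extension procedure to $A_{0}$: at each stage adjoin square roots of every element of every maximal ideal, take the direct limit, and iterate transfinitely until the process stabilizes.  The output is a uniform algebra $B$ on a compact space $X$ such that the canonical map $\mathfrak{M}_{B}\to\mathfrak{M}_{A_{0}}$ is a homeomorphism, every element of every maximal ideal of $B$ has a square root in that ideal, and consequently every point derivation on $B$ is zero.  Standard Cole--Feinstein transfer theorems then show that strong regularity and the existence of bounded relative units are preserved at each stage, and hence in the direct limit, and that the Gleason parts of $B$ agree under the canonical identification with those of $A_{0}$.  In particular, $P_{0}$ persists as a Gleason part of $B$ of size exactly $n$, bounded relative units survive at every point of $X\setminus P$, $B$ is strongly regular, and $B$ admits no nonzero point derivations.

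The main obstacle is the first step --- producing a uniform algebra with an $n$-element Gleason part together with the other stipulated properties.  Finite Gleason parts of size at least $2$ are unusual, and one must simultaneously exhibit families of functions in $A_{0}$ witnessing the upper bound on Gleason distances within $P_{0}$ (pairwise Gleason distance strictly less than $2$), prove the matching lower bound separating each $p_{i}$ from every point of $X_{0}\setminus P_{0}$, and maintain strong regularity and bounded relative units off $P_{0}$.  Once $A_{0}$ is in hand, the remainder of the argument is a direct application of the Cole machinery already employed in the proof of Theorem~\ref{main-theorem-p1}.
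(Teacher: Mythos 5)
Your proposal has a fatal flaw in the choice of which square roots to adjoin. You propose to ``adjoin square roots of every element of every maximal ideal.'' If that is carried out even at a \emph{single} maximal ideal $M_\phi$, then in the limit every $f\in M_\phi$ factors as $f=g^2$ with $g\in M_\phi$, and since $M_\phi$ is a primary ideal, Lemma~\ref{lemma} (applied with $J=M_\phi$) forces \emph{every} Gleason part of $B$ to be a singleton. So the procedure you describe obliterates precisely the nontrivial Gleason part you worked to build into $A_0$, and the construction cannot yield an $n$-element part. The paper's procedure is more surgical: at each stage the set $\sf_\alpha$ of functions whose square roots are adjoined is $M_{x_\alpha^{(1)}}\cap\cdots\cap M_{x_\alpha^{(n)}}$, the intersection of the maximal ideals at the $n$ distinguished points. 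For $n\geq 2$ this ideal is \emph{not} primary, so Lemma~\ref{lemma} does not apply to it, yet adjoining these roots still makes every point off $P=\{x_1,\ldots,x_n\}$ a generalized peak point (Lemmas~\ref{F-H} and~\ref{bru-implies-gen-peak-point}), kills point derivations at each $y\in P$ via the factorization $f=(g+ih)(g-ih)$ with $g$ interpolating square roots of $f$ at the $n$ points and $h\in\sf$ a square root of $f-g^2$, and keeps the $n$ points together because the averaging operator $T_\Omega$ of Lemma~\ref{3.3} gives $\|x_j-x_k\|\leq\|x_0^{(j)}-x_0^{(k)}\|<2$.

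A second issue: you front-load the hardest work into a base algebra $A_0$ that must already have an $n$-element Gleason part, bounded relative units off it, and strong regularity off it, and you acknowledge this construction is only sketched. The paper needs no such base: any normal uniform algebra with \emph{some} Gleason part containing at least $n$ points (McKissick's algebra) suffices, since the root extensions carve the possibly much larger ambient part down to exactly the $n$ chosen points and create bounded relative units and one-point parts everywhere else. Finally, your plan does not explain why $B$ is strongly regular \emph{at} the $n$ points of $P$; bounded relative units give strong regularity only off $P$. The paper handles this by replacing $\tA=A_\Omega$ with the subalgebra $B=A_{x_1}\cap\cdots\cap A_{x_n}$ (Feinstein's $A_x=\ol{J_x}\oplus\C\cdot 1$ construction) and invoking Corollary~\ref{strong-reg}.
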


That Theorems~\ref{main-theorem-d2} and~\ref{main-theorem-p2} strengthen Theorems~\ref{main-theorem-d1} and~\ref{main-theorem-p1} is a consequence of known results recalled in Section~\ref{prelim} below.  The reader should compare Theorem~\ref{main-theorem-d2} with \cite[Theorems~5.1 and~5.3]{F1} of Feinstein.

In contrast to the situation in Theorems~\ref{main-theorem-d1} and~\ref{main-theorem-d2}, it is unknown whether the space on which the uniform algebras in 
Theorems~\ref{main-theorem-p1} and~\ref{main-theorem-p2} are defined can be taken to be metrizable.  
It is well known that each point of a nontrivial Gleason part is a nonpeak point, and it seems to be a difficult open question whether there exist uniform algebras (with or without nontrivial Gleason parts) on a metrizable space having no nonzero, possibly unbounded, point derivations at a nonpeak point.
If in Theorem~\ref{main-theorem-p1} we relax the requirements on $B$ to allow \emph{unbounded} point derivations, then Wermer's example shows that metrizability can be achieved.  Wermer's example does not satisfy the additional conditions given in Theorem~\ref{main-theorem-p2}.  However, a modification of the proof of Theorem~\ref{main-theorem-p2} shows that subject to allowing unbounded point derivations, metrizability can be achieved there as well.

\begin{theorem}\label{main-theorem-p3}
For each integer $n\geq2$, there exists a strongly regular uniform algebra
$B$ on a compact \emph{metrizable} space $X$ such that $B$ has a Gleason part $P$ that has exactly $n$ elements, $B$ has bounded relative units at every point of $X\sm P$, and there are no nonzero \emph{bounded} point derivations on $B$. 
\end{theorem}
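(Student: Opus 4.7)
The strategy is to revisit the construction behind Theorem~\ref{main-theorem-p2} and arrange for all choices to be made from \emph{countable} families, so that the spectrum remains metrizable, while accepting the weaker conclusion that only \emph{bounded} point derivations are ruled out.  The proof of Theorem~\ref{main-theorem-p2} presumably begins with a base uniform algebra $A_0$ on a compact space $X_0$ containing $n$ distinguished points that form a nontrivial Gleason part, and then runs an iterated root-extension construction of Cole--Feinstein type which (a) makes the algebra strongly regular off the finite part, (b) introduces bounded relative units at every point off the part, and (c) kills every point derivation.  The reason that construction produces a non-metrizable spectrum is that killing \emph{unbounded} point derivations requires an uncountable family of square-root adjunctions; it is precisely dropping that requirement that should allow us to retain metrizability.

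My plan is as follows.  First, I would take $A_0$ to be a metrizable algebra with $n$ specified points already forming a nontrivial Gleason part; a Wermer-type $R(K)$ in the case $n=2$, or a suitable symmetric gluing of $n$ copies of such an example at the distinguished points, seems the natural candidate.  Since $X_0$ is metrizable, $A_0$ is separable.  Second, I would run the Cole--Feinstein iteration from the proof of Theorem~\ref{main-theorem-p2}, but at each stage adjoining square roots for only a \emph{countable} dense subset of the separable algebra-in-progress.  Since adjoining square roots of a countable family of functions on a metrizable compactum produces a metrizable extension, and a countable inverse limit of metrizable compacta is metrizable, the terminal spectrum $X$ will be metrizable.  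Third, I would check that the countable adjunctions still suffice to force (i) strong regularity off $P$, (ii) bounded relative units off $P$, and (iii) vanishing of every bounded point derivation on all of $X$.  Property~(iii) is the one that would fail for unbounded derivations: a bounded point derivation at $x$ annihilates $M_x^2$, so arranging that every element of $M_x$ becomes, in the limit, the square of another element of $M_x$ kills all bounded derivations at $x$, including at points of $P$, and this can be achieved by countably many well-chosen adjunctions; unbounded derivations need not survive such a countable procedure but also need not die.

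The principal obstacle I anticipate is verifying that the finite Gleason part $P$ survives the iteration: one must show both that the fiber of the limiting projection $X\to X_0$ over each $p_i\in P$ is a single point (so that $|P|$ remains exactly $n$) and that no new nontrivial Gleason part is created in $X\sm P$.  The latter is typically automatic from property~(ii), since a point with a bounded relative unit sits in a one-point part.  The former requires arranging the countable families at each stage so that the extension acts trivially on $P$, for example by choosing, in a neighborhood of $P$, only functions whose square roots already lie in the current algebra.  Granted this careful bookkeeping, the remaining verifications---strong regularity off $P$, bounded relative units off $P$, and absence of bounded point derivations on $B$---should run parallel to the arguments for Theorem~\ref{main-theorem-p2}, with ``countable'' systematically replacing ``arbitrary'' throughout.
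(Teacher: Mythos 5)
Your high-level strategy is exactly the one the paper uses: replace the transfinite (length $\Omega$) iteration from the proof of Theorem~\ref{main-theorem-p2} by a countable one, adjoining square roots of only countable dense families, so that metrizability survives at the cost of killing only \emph{bounded} point derivations (since one only gets that $M_y\subset\overline{M_y^2}$, not $M_y= M_y^2$).  However, two concrete choices in your sketch would not work.

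First, the starting algebra must be \emph{normal}, not just a uniform algebra on a metrizable space with an $n$-point subset of a part.  The whole machinery that makes the final algebra strongly regular off $P$ and gives bounded relative units there rests on Lemma~\ref{preserve-normal2}, Corollary~\ref{strong-reg}, and the $A_x=\ol J_x\oplus\C\cdot 1$ device, all of which require normality; and the strong regularity conclusion forces the limit algebra to be normal (Wilken), which cannot materialize from a non-normal seed in this construction.  Wermer's $R(K)$ is \emph{not} normal, and gluing copies of it won't make it normal.  The paper instead starts from McKissick's normal $R(K)$, whose underlying set is metrizable, picking $n$ distinct points in a single Gleason part.

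Second, your proposed mechanism for keeping the fiber of $X\to X_0$ over each $p_i$ a singleton---``choosing, in a neighborhood of $P$, only functions whose square roots already lie in the current algebra''---doesn't achieve that.  If $f$ already has a square root $g\in A$ and $f(p_i)\neq 0$, adjoining a formal root $p_f$ still doubles the fiber over $p_i$, since $(p_i,\pm g(p_i))$ both land in $X_\sf$.  What one actually does (as in the paper) is take $\sf_\alpha$ to be a countable dense subset of $M_{x_\alpha^{(1)}}\cap\cdots\cap M_{x_\alpha^{(n)}}$; then every adjoined function \emph{vanishes} at the $x_\alpha^{(i)}$, so those fibers are singletons.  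This choice of $\sf_\alpha$ also makes the argument that point derivations die at points of $P$ go through: for $f\in M_y$ with $y\in P$ one picks $g$ interpolating the values $\pm\sqrt{f(x_j)}$, notes $f-g^2$ lies in the intersection of the $M_{x_j}$'s, approximates it by $h^2$ with $h\in\sf\subset M_y$, and factors $f\approx(g+ih)(g-ih)\in M_y^2$; your description (``every element of $M_x$ becomes a square'') omits this factorization, which is genuinely needed when $n\geq 2$.  Finally, showing the $n$ points stay in a common part uses the norm-one left inverse $T$ of the embedding (Lemma~\ref{3.3}), which your sketch doesn't mention; it's not automatic.
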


In the next section we define various terms already used above and present other needed background and preliminary results.  In Sections~\ref{Cole} we discuss Brian Cole's method of root extensions, which we will use in constructing our examples.  Finally, Theorems~\ref{main-theorem-d1} and~\ref{main-theorem-d2} are proved in Section~\ref{d}, while Theorems~\ref{main-theorem-p1}, \ref{main-theorem-p2}, and~\ref{main-theorem-p3} are proved in Section~\ref{p}.  Notations introduced in Sections~\ref{prelim} and~\ref{Cole} will be used in Sections~\ref{d} and~\ref{p} without further comment.

%
%
%
%

\section{Preliminaries}\label{prelim}

In this section we introduce terminology, notation, and conventions that we will use.  We also present some results we will need.

Throughout the paper all spaces will tacitly be required to be Hausdorff.  Throughout this section $A$ will be a uniform algebra on a compact space $X$ and $x$ will be a point of $X$.

We tacitly regard $X$ as a subspace of the maximal ideal space $\ma$ of $A$ by identifying each point of $X$ with the corresponding point evaluation functional.  When convenient, we will also tacitly regard $A$ as a uniform algebra on $\ma$ via the Gelfand transform.  When clarity seems to require it, the Gelfand transform of a function $f$ in $A$ will be denoted in the customary way by $\h f$.

The point $x$ is said to be a \emph{peak point} for $A$ if there is a function $f$ in $A$ such that $f(x)=1$ and $|f(y)|<1$ for every $y\in X\sm \{x\}$.  The point $x$ is said to be a \emph{generalized peak point} if for every neighborhood $U$ of $x$ there exists a function $f$ in $A$ such that $f(x)=\|f\|=1$ and $|f(y)|<1$ for every $y\in X\sm U$.  When the space $X$ is metrizable, the notions of peak point and generalized peak point coincide.

For $\phi\in \ma$ we define the ideals $M_\phi$ and $J_\phi$ by
\begin{align} 
M_\phi &= \{\, f\in A: \h f(\phi)=0\,\}  \nonumber  \\ 
\intertext{and} 
J_\phi &= \{ \, f\in A: \h f^{-1}(0)\ \hbox{contains a neighborhood of $\phi$ in $\ma$}\}. \nonumber 
\end{align}

The uniform algebra $A$ is \emph{normal} on $X$ if for each pair of disjoint closed subsets $K_0$ and $K_1$ of $X$ there exists a function $f$ in $A$ such that $f|K_0=0$ and $f|K_1=1$.  It is well known \cite[Theorem~27.3]{S1} that if $A$ is normal on $X$ then $X=\ma$.  The uniform algebra $A$ is \emph{strongly regular} at the point $x$ if $J_x$ is dense in $M_x$.  The uniform algebra $A$ is \emph{strongly regular} if it is strongly regular at every point of $X$.  It was proven by Donald Wilken that every strongly regular uniform algebra is normal \cite[Corollary~1]{Wilken}.

The uniform algebra $A$ has \emph{bounded relative units} at the point $x$ if there exists a positive constant $C$ such that for each compact subset $K$ of $X\sm \{x\}$ there exists a function $f$ in $J_x$ such that $f|K=1$ and $\|f\|\leq C$.  
We will need the following result of Joel Feinstein \cite[Proposition~1.5]{F1}.

\blem\label{bru-implies-gen-peak-point}
If $A$ has bounded relative units at $x$, then $x$ is a generalized peak point for $A$ and $A$ is strongly regular at $x$.
\elem

We will  also need the following lemma of Feinstein and Heath \cite[Lemma~4.3]{FH}.

\blem\label{F-H}
Let $A$ be a uniform algebra on a compact space $Y$, and let $y$ be a point of $Y$.  Suppose that, for each compact subset $E$ of $Y\sm\{y\}$, there exists an open neighborhood U of $y$ and an $f\in A$ such that
\begin{enumerate}
\item[(i)] $f|U=1$.
\item[(ii)] $f|E=0$.
\item[(iii)] For each $k\in\N$ there is a $g\in A$ with $g^{2^k}=f$.
\end{enumerate}
Then $A$ has bounded relative units at $y$.
\elem

An ideal is said to be \emph{primary} if it is contained in a unique maximal ideal.  (This use of the term \emph{primary} is unrelated to its use in commutative algebra.)  If $I$ is a primary ideal contained in $M_x$, then $I$ is said to be \emph{local} if $I$ contains $J_x$.  (Observe that this condition is equivalent to the statement that whether a function $f\in A$ belongs to $I$ depends only on the germ of $f$ at $x$.)  The notion of localness can be generalized to arbitrary ideals in $A$, but we omit the general definition as we will have no need of it.  We will, however, need the following standard result \cite[Proposition~4.1.20(iv)]{D} in the special case of primary ideals.  

\blem\label{Garth's-book}
Every ideal in a normal uniform algebra is local.
\elem

The \emph{Gleason parts} for the uniform algebra $A$ are the equivalence classes in the maximal ideal space of $A$ under the equivalence relation $\phi\sim\psi$ if $\|\phi-\psi\|<2$ in the norm on the dual space $A^*$.  (That this really is an equivalence relation is well-known but {\it not\/} obvious.)
We say that a Gleason part is \emph{nontrivial} if it contains more than one point.

The following lemma is standard.  (See \cite[Lemmas~2.6.1]{Browder}.)

\begin{lemma}\label{samepart}
Two multiplicative linear functionals $\phi$ and $\psi$ on a uniform algebra $A$ lie in the same Gleason part if and only if
$$\sup\{|\psi(f)|: f\in A, \|f\|\leq 1, \phi(f)=0\}< 1.$$
\end{lemma}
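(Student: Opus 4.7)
The plan is to set $\rho := \sup\{|\psi(f)| : f\in A,\ \|f\|\leq 1,\ \phi(f)=0\}$, observe that automatically $\rho\in[0,1]$, and prove the equivalence $\rho<1\iff\|\phi-\psi\|_{A^*}<2$, since the right-hand side is by definition the condition that $\phi$ and $\psi$ lie in the same Gleason part. Both directions come down to relating $(\phi-\psi)$ on the unit ball of $A$ to its restriction to $\ker\phi$.

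For the implication $\rho<1\Rightarrow\|\phi-\psi\|_{A^*}<2$, I would take any $f\in A$ with $\|f\|\leq 1$ and decompose it as $f=\phi(f)\cdot 1+g$, where $g:=f-\phi(f)$ lies in $\ker\phi$ and satisfies $\|g\|\leq\|f\|+|\phi(f)|\leq 2$. A direct computation gives $(\phi-\psi)(f)=-\psi(g)$, and applying the definition of $\rho$ to $g/\|g\|$ (when $g\neq 0$) yields $|\psi(g)|\leq \rho\|g\|\leq 2\rho$. Taking the supremum over $f$ in the unit ball gives $\|\phi-\psi\|_{A^*}\leq 2\rho<2$.

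For the converse I would argue the contrapositive: assuming $\rho=1$, I show $\|\phi-\psi\|_{A^*}=2$. Pick $f_n\in A$ with $\phi(f_n)=0$, $\|f_n\|\leq 1$, and $|\psi(f_n)|\to 1$; replacing each $f_n$ by a suitable unit-modulus multiple, I may assume $\psi(f_n)\to 1$. The main obstacle is that the $f_n$ themselves only witness $\|\phi-\psi\|_{A^*}\geq 1$, not $2$, so I need to produce better test functions. For each fixed $c\in(0,1)$ consider the disc automorphism $\tau_c(z)=(z-c)/(1-cz)$. Since $\|cf_n\|\leq c<1$, the element $1-cf_n$ is invertible in $A$ by a Neumann-series argument, so $g_n:=(f_n-c)(1-cf_n)^{-1}$ is a well-defined element of $A$ with $\chi(g_n)=\tau_c(\chi(f_n))$ for every $\chi\in\ma$. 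Because $\tau_c$ carries $\bar\D$ into itself and $|\chi(f_n)|\leq 1$ for all $\chi\in\ma$, one has $\|g_n\|=\sup_{\chi}|\tau_c(\chi(f_n))|\leq 1$. A direct calculation gives $\phi(g_n)=\tau_c(0)=-c$ and $\psi(g_n)=\tau_c(\psi(f_n))\to\tau_c(1)=1$, so $|(\phi-\psi)(g_n)|\to 1+c$, and hence $\|\phi-\psi\|_{A^*}\geq 1+c$. Letting $c\to 1^-$ yields $\|\phi-\psi\|_{A^*}\geq 2$, which combined with the trivial bound $\|\phi-\psi\|_{A^*}\leq\|\phi\|_{A^*}+\|\psi\|_{A^*}=2$ forces equality. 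The M\"obius-automorphism trick is the one nontrivial step; everything else is bookkeeping.
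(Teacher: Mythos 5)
The paper does not prove this lemma: it cites it as standard, from Browder's book. Your argument is correct and uses the same M\"obius-automorphism device that underlies the standard proof (in particular, you correctly invoke that in a uniform algebra $\|g\|$ equals $\sup_{\chi}|\chi(g)|$ over the maximal ideal space, so $\|g_n\|\le 1$); the only harmless simplification is that in the easy direction you settle for the linear decomposition $f=\phi(f)\cdot 1+g$ and the bound $\|\phi-\psi\|\leq 2\rho$ rather than deriving the exact Gleason formula $\|\phi-\psi\|=2(1-\sqrt{1-\rho^{2}})/\rho$, which is all the lemma requires.
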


For $\phi$ in $\ma$, a \emph{point derivation} on $A$ at $\phi$ is a linear functional $\psi$ on $A$ satisfying the identity 
\begin{equation}\label{derivation-eq}
\psi(fg)=\psi(f)\phi(g) + \phi(f)\psi(g)
\end{equation}
for all $f$ and $g$ in $A$.
A point derivation is said to be \emph{bounded} if it is bounded (continuous) as a linear functional.
Now let $n$ be a positive integer or $\infty$.  A \emph{point derivation of order} $n$ at $\phi$ is a sequence $d=\der$ of linear functionals on $A$ such that for all $f$ and $g$ in $A$
\begin{align}\label{0}
d^{(0)}f&=f(\phi)  \\ 
\intertext{and}
\dnonum(fg)&=\sum_{j=0}^k (d^{(j)}f) (d^{(k-j)}g)\qquad \hbox{for all $k=1,2,\ldots$}. \label{k}
\end{align}
The point derivation $d$ is \emph{bounded} if each $d^{(k)}$ is bounded.  The point derivation $d$ is \emph{nondegenerate} if $d^{(1)}\neq 0$.  We define the \emph{kernel} $\ker d$ of the point derivation $d=\der$ by
\[
\ker d =\{\, f\in A: d^{(k)}f=0\ \hbox{for all $k=0,1,2,\ldots$}\}.
\]
When $d$ is nondegenerate the functionals $d^{(0)}, d^{(1)}, d^{(2)}, \ldots$ are linearly independent.

There is some ambiguity in our use of the term \lq\lq point derivation\rq\rq\  since it can refer either to a single linear functional satisfying equation~(\ref{derivation-eq}) or a sequence of linear functionals satisfying equations~(\ref{0}) and~(\ref{k}).  The careful reader will be able to discern which meaning is intended from the context.  Clearly a linear functional $d^{(1)}$ that is a point derivation at $\phi$ can be identified with a point derivation $d=(d^{(k)})_{k=0}^1$ of order 1 at $\phi$ by taking $d^{(0)}$ to be the functional of evaluation at $\phi$.

It is standard \cite[p.~64]{Browder} that a linear functional $\psi$ on $A$ is a point derivation at $\phi$ if and only if $\psi$ annihilates $M_\phi^2$ and the constant functions, and hence there exists a nonzero point derivation at $\phi$ if and only if $M_\phi^2\neq M_\phi$, and there exists a \emph{bounded} point derivation at $\phi$ if and only if $\ol {M_\phi^2}\neq M_\phi$.  It is elementary that the kernel of a point derivation $d=(d^{(k)})_{k=0}^n$ is an ideal, and for $n$ finite, the kernel of $d$ contains $M_\phi^{n+1}$.  Consequently, the kernel of a point derivation of finite order is a \emph{primary} ideal.  However, the kernel of a point derivation of infinite order can fail to be primary.  However, the following simple consequence of Lemma~\ref{Garth's-book} insures that on a normal uniform algebra even the kernel of a point derivation of infinite order is primary.

\blem\label{localness}
Let $A$ be a normal uniform algebra, and let $d=\der$ be a derivation of order $n$, with $1\leq n\leq \infty$, on $A$.  Then $\ker d$ is a local, primary ideal.
\elem

\bpf
The ideal $\ker d$ is local by Lemma~\ref{Garth's-book}, and we have noted above that the kernel of every finite order point derivation is primary.  We must show that $\ker d$ is primary in the case $n=\infty$.  Denote the kernel of $d=(d^{(k)})_{k=0}^\infty$ by $I$, and for each finite $r=1,2,\ldots$ denote the kernel of the finite order point derivation $(d^{(k)})_{k=0}^r$ by $I_r$.  Then $I=\bigcap_{r=1}^\infty I_r$.  Let $x$ denote the point at which the derivation $d$ is located.  Then $I_r\supset J_x$ for every $r=1,2,\ldots$.  Therefore, $I\supset J_x$.  Because $A$ is normal, $J_x$ is primary.  Consequently, $I$ is primary as well.
\epf

It is well known (and obvious from Lemma~\ref{samepart}) that every generalized peak point is a one-point Gleason part.  It is also well known that at a generalized peak point there are no nonzero point derivations \cite[Section~1--6]{Browder}.

We will achieve the strong regularity of the algebras in Theorems~\ref{main-theorem-p2} and~\ref{main-theorem-p3} by using a beautifully simple method from the paper of Feinstein \cite{F1}.  Following the notation of Feinstein, we set $A_x=\ol J_x \oplus \C\cdot 1$, where $\C\cdot 1$ denotes the constant functions on $X$.  The following two lemmas are contained in \cite[Lemmas~4.1 and~4.3]{F1}.

\blem\label{4.1}
Let the uniform algebra $A$ be normal.  Then $A_x$ is a normal uniform algebra that is strongly regular at $x$.
\elem

\blem\label{4.3}
Let the uniform algebra $A$ be normal.  Let $y$ be a point of $X$ distinct from $x$.  Then $(A_x)_y=(A_y)_x=A_x\cap A_y$.
\elem

We will need the following observation whose straightforward proof is left to the reader.

\blem\label{simple}
Let the uniform algebra $A$ be normal.  Suppose $A$ has bounded relative units at a point $y\in X$.  Then $A_x$ has bounded relative units at $y$, as well.
\elem

\bcor\label{strong-reg}
Let the uniform algebra $A$ be normal, and let $x_1,\ldots, x_n$ be points of $X$.
\begin{enumerate}
\item[(i)] At each of the points $x_1,\ldots, x_n$, the uniform algebra $A_{x_1}\cap\cdots\cap A_{x_n}$ is strongly regular.
\item[(ii)] At each point of $X$ where $A$ has bounded relative units, so does $A_{x_1}\cap\cdots\cap A_{x_n}$.
\end{enumerate}
\ecor

\bpf
By an induction argument left to the reader, it follows from Lemma~\ref{4.3} that for every permutation $\sigma$ of $x_1,\ldots, x_n$ we have
\[
A_{x_1} \cap \cdots \cap A_{x_n} = \bigl ( \cdots \bigl ( \bigl(A_{\sigma(x_1)}\bigr)_{\sigma(x_2)} \bigr )_{\sigma{(x_3)}} \cdots\bigr )_{\sigma(x_n)}
\]
The corollary is then immediate from Lemmas~\ref{4.1} and~\ref{simple}.
\epf

%
%

\section{Cole's method of root extensions}\label{Cole}

Cole's method of root extensions involves an iterative process. We begin by discussing a single step of the iteration.

Let $A$ be a uniform algebra on a compact space $X$, and let $\sf$ be a (nonempty) subset of $A$.  Endow $\C^\sf$ with the product topology.  Let $p_1:X\times \C^\sf\ra X$ and $p_f:\xcf\ra\C$ denote the projections given by $p_1(x, (z_g)_{g\in \sf})=x$ and $p_f(x, (z_g)_{g\in\sf})=z_f$.  Define $X_\sf\subset \xcf$ by
\[
X_\sf = \{\, y\in\xcf:\bigl(p_f(y)\bigr)^2=f\bigl(p_1(y)\bigr) \ \hbox{for all $f\in\sf$}\,\},
\]
and let $A_\sf$ be the uniform algebra on $X_\sf$ generated by the set of functions $\{\, f\circ p_1: f\in A\} \cup \{\, p_f: f\in\sf\}$.  On $X_\sf$ we have $p_f^2=f\circ p_1$ for every $f\in \sf$.  Set $\pi=p_1|X_\sf$, and note that $\pi$ is surjective.  There is an isometric embedding $\pi^*:A\ra A_\sf$ given by $\pi^*(f)=f\circ \pi$.

We call the uniform algebra $A_\sf$ or the pair $(A_\sf, X_\sf)$, the $\sf$-extension of $A$, and we call $\pi$ the associated surjection.  Note that if $X$ is metrizable and $\sf$ is countable, then $X_\sf$ is metrizable also.  By \cite[Theorem~1.6]{Cole}, if $X=\ma$, then $X_\sf=\maf$.  Given $x\in X$, if $\sf$ is contained in $M_x$, the $\pi^{-1}(x)$ consists of a single point.  

There is an operator $S:A_\sf\ra \pi^*(A)$ given by integrating over the fibers of $\pi$ using the measure on each fiber that is invariant under the obvious action of $(\Z/2)^\sf$ on each fiber.  See \cite{Cole} or \cite[pp.~194--195]{S1} for details.  
Rather than working with $S$, we will use the operator 
$T: A_\sf\ra A$ obtained from $S$ by identifying $\pi^*(A)$ with $A$.  
The following properties of $T$ are almost obvious.

\blem\label{key-trick}
\phantom{.}
\begin{enumerate}
\item[(i)] $\|T\|=1$.
\item[(ii)] $T\circ \pi^*$ is the identity.
\item[(iii)] Given distinct functions $f_1,\ldots, f_r\in \sf$ and a function $f\in A$,
$$T\bigl(\pi^*(f) p_{f_1}\cdots p_{f_r}\bigr)=0.$$
\end{enumerate}
\elem

The following is contained in \cite[Theorem~2.4]{F1}.

\blem\label{preserve-normal1}
If $A$ is normal, then so is $A_\sf$.
\elem

One can iterate the above extension process to obtain an infinite sequence of 
uniform algebras and then take a direct limit to obtain another uniform algebra.  That is the procedure we will use to obtain the examples in Theorems~1.1 and~1.3.  However, for some purposes that procedure is inadequate and transfinite induction is needed to obtain the desired algebra; this is the case in the proof of Theorems~1.2 and~1.4.  Then the notion of a system of root extensions is needed.

Let $\tau$ be a fixed infinite ordinal.  A \emph{system of root extensions} is a triple of indexed sets $\bigl(\{\aa\}, \{\xa\}, \{\piab\}\bigr)$ $(0\leq \alpha\leq\beta\leq\tau)$ (denoted for brevity by $\{\aa\}_{0\leq\alpha\leq\tau}$) where each $\aa$ is a uniform algebra, each $\xa$ is a compact space, and each $\piab$ is a continuous surjective map $\piab:X_\beta\ra \xa$ such that the following conditions hold:
\begin{enumerate}
\item[(i)] The equation $\pi^*_{\alpha,\beta}(f)=f\circ \piab$ defines a homomorphism of $\aa$ into $A_\beta$.
\item[(ii)] For $\alpha\leq\beta\leq\gamma$, $\piab\circ\pi_{\beta,\gamma}=\pi_{\alpha,\gamma}$, and $\pi_{\alpha,\alpha}$ is the identity on $\xa$.
\item[(iii)] For $\alpha<\tau$, there is a subset $\fa$ of $\aa$ such that $A_{\alpha+1}$ is the $\fa$-extension of $\aa$ and $\pi_{\alpha,\alpha+1}$ is the associated surjection.
\item[(iv)] For $\gamma$ a limit ordinal, $X_\gamma$ is the inverse limit of the inverse system $\{\xa, \piab\}_{\alpha\leq\beta<\gamma}$, the maps $\pi_{\alpha,\gamma}:X_\gamma\ra\xa$ are those associated with the inverse limit, and $A_\gamma$ is the closure in $C(X_\gamma)$ of $\bigcup\limits_{\alpha<\gamma} \pi^*_{\alpha,\gamma}(\aa)$.
\end{enumerate}

The existence of systems of root extensions is of course proved by transfinite induction.  A choice of the subsets $\sf_\alpha$ uniquely determines a system of root extensions.

The following is contained in \cite[Theorem~2.1]{Cole}

\blem\label{3.3}
Given a system of root extensions $\{A_\alpha\}_{0\leq \alpha\leq \tau}$ there is a linear operator $T_\tau: A_\tau\ra A_0$ such that
\begin{enumerate}
\item[(i)] $\|T_\tau\|=1$.
\item[(ii)] $T_\tau\circ \pi_{0,\tau}^*$ is the identity.
\end{enumerate}
\elem

The following is \cite[Corollary~2.9]{F1}.

\blem\label{preserve-normal2}
Given a system of root extensions $\{A_\alpha\}_{0\leq \alpha\leq \tau}$, if $A_0$ is normal, then $A_\alpha$ is normal for all $\alpha$.
\elem

%
%
%
%

\section{Nonzero bounded point derivations in the absence of nontrivial Gleason parts}\label{d}

The following lemma is the key to our construction of a uniform algebra on which there is a nonzero bounded point derivation but which has no nontrivial Gleason parts.

\begin{lemma}\label{key-lemma}
Let $A$ be a uniform algebra on a compact space $X$, and let $x$ be a point of $X$.  Suppose that $d=\der$ is a nondegenerate bounded point derivation of order $n$ {\rm(}$1\leq n\leq \infty${\rm)} at $x$, and that $\sf$ is a subset of $\ker d$.  Then there is a nondegenerate bounded point derivation $D=\Der$ of order $n$ on the $\sf$-extension $\Asf$ of $A$, at the point $y=\pi^{-1}(x)$, satisfying $\Dnonum\circ \pi^*=\dnonum$ and $\|\Dnonum \| = \| \dnonum \|$ for all $k=0,1,2,\ldots$.
\end{lemma}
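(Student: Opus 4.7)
The plan is to set $D^{(k)} = d^{(k)} \circ T$ for every $k \geq 0$, where $T \colon A_\sf \to A$ is the norm-one operator from Lemma~\ref{key-trick}, and then check that this sequence satisfies the conclusions of the lemma. The identity $D^{(k)} \circ \pi^* = d^{(k)}$ is immediate from $T \circ \pi^* = \mathrm{id}$, and the norm equality $\|D^{(k)}\| = \|d^{(k)}\|$ follows by combining $\|T\| = 1$ with the fact that $\pi^*$ is an isometric embedding. For $D^{(0)}$, observe that $\sf \subset \ker d \subset M_x$, so the fiber $\pi^{-1}(x)$ is the single point $y$ by the remark following the definition of $A_\sf$; the fiber-averaging definition of $T$ then collapses to $(TF)(x) = F(y)$, giving $D^{(0)}F = F(y)$. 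Nondegeneracy of $D$ is then immediate from $D^{(1)} \circ \pi^* = d^{(1)} \neq 0$.

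The substantive step is the higher-order Leibniz identity
\[
D^{(k)}(FG) \;=\; \sum_{j=0}^{k}\bigl(D^{(j)}F\bigr)\bigl(D^{(k-j)}G\bigr).
\]
Both sides depend continuously on $F, G \in A_\sf$, so it is enough to verify the identity on the dense subalgebra of finite linear combinations of monomials of the form $\pi^*(f)\, p_{s_1}\cdots p_{s_r}$ with $f \in A$ and $s_1,\ldots,s_r \in \sf$ distinct. Writing $F = \pi^*(f)\prod_{s \in S} p_s$ and $G = \pi^*(g)\prod_{t \in R} p_t$, the relations $p_h^2 = \pi^*(h)$ rewrite $FG$ as $\pi^*\bigl(fg\prod_{c \in S\cap R} c\bigr)\prod_{h \in S \triangle R} p_h$. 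By Lemma~\ref{key-trick}(iii), the value of $T$ on a monomial with a nonempty $p$-part is zero, and hence three cases suffice: $S = R = \emptyset$, in which the identity is just the Leibniz rule for $d$ applied to $f$ and $g$; $S \neq R$, in which $TF$, $TG$, and $T(FG)$ all vanish; and the one nontrivial case $S = R \neq \emptyset$.

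The hypothesis $\sf \subset \ker d$ is there precisely to handle this last case. In it, the right side is zero because $TF = TG = 0$, while the left side equals $d^{(k)}\bigl(fg\prod_{s \in S} s\bigr)$. Since $\ker d$ is an ideal of $A$ (a fact noted just before Lemma~\ref{localness}) and every $s \in S$ lies in $\ker d$, the argument of $d^{(k)}$ lies in $\ker d$, so $d^{(k)}$ annihilates it. The main obstacle is really just keeping the monomial bookkeeping tidy; once one sees that $T$ kills any monomial with an uncancelled $p$-factor, the lemma's one nontrivial hypothesis $\sf \subset \ker d$ disposes of the only surviving case, and everything else is formal.
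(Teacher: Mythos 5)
Your proof is correct and follows essentially the same route as the paper's: set $D^{(k)} = d^{(k)}\circ T$, deduce the compatibility and norm identities from $T\circ\pi^* = \mathrm{id}$ and $\|T\|=1$, and then verify the Leibniz identity on a dense subalgebra, using Lemma~\ref{key-trick}(iii) to annihilate monomials with a surviving $p$-factor and the ideal property of $\ker d$ to annihilate the one remaining term. The paper organizes the expansion of the product a bit differently --- writing arbitrary $f, g$ in the dense subalgebra as $\pi^*(f_0) + \sum_u \pi^*(f_u)F_u$ and treating four blocks of terms --- but your direct reduction to monomials via bilinearity is the same argument repackaged.

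One small slip worth fixing: in the case $S \neq R$ you assert that $TF$, $TG$, and $T(FG)$ all vanish. If exactly one of $S$, $R$ is empty, say $S=\emptyset$, then $TF = f$ need not vanish. The conclusion is unharmed --- $T(FG) = 0$ since $S\triangle R\neq\emptyset$, and $TG = 0$ already forces each summand $\bigl(D^{(j)}F\bigr)\bigl(D^{(k-j)}G\bigr)$ to vanish --- but the assertion as written is false. The clean statement is that whenever $S\neq R$, $T(FG)=0$ and at least one of $TF$, $TG$ is zero, which is all that the identity needs.
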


\bpf
Let $T:\Asf\ra A$ be as in Section~\ref{Cole}.  For each $k=0,1,2,\ldots$, define $\Dnonum:\Asf\ra\C$ by 
\[
\Dnonum=\dnonum\circ T.
\]
Clearly each $\Dnonum$ is a bounded linear functional and by Lemma~\ref{key-trick}(ii), $\Dnonum\circ\pi^*=\dnonum\circ T\circ \pi^*=\dnonum$.  To see that $\|\Dnonum\|=\|\dnonum\|$, note that $\|\Dnonum\|\leq\|\dnonum\|  \, \|T\| = \|\dnonum\|$, while also $\|\Dnonum\| \geq \|\dnonum\|$ because for each $f\in A$ we have $\Dnonum\bigl(\pi^*(f)\bigr) = \dnonum \bigl(T(\pi^*(f))\bigr)=\dnonum f$.
In particular, $D^{(1)}\neq 0$, so $D=\Der$ is nondegenerate.  Note also that $D^{(0)}f=f(y)$ for $f\in A_\sf$.  

It remains to be shown that $D$ satisfies, for each $k=1,2,\ldots$, the derivation identity
\begin{equation}\label{derivation-identity}
\Dnonum(fg)=\sum_{j=0}^k (D^{(j)}f) (D^{(k-j)}g)
\end{equation}
for all $f,g\in A_\sf$.
It suffices to prove equation~(\ref{derivation-identity}) for $f$ and $g$ belonging to
the dense subalgebra $H$ of $\Asf$ that is algebraically generated by $\pi^*(A) \cup \{\, p_f:f\in \sf\}\, $.  Functions $f$ and $g$ in $H$ can be expressed in the form 
\begin{align}
f &= \pi^*(f_0)+\sum_{u=1}^s \pi^*(f_u) F_u \nonumber  \\ 
\intertext{and}
g &= \pi^*(g_0)+\sum_{v=1}^t \pi^*(g_v) G_v \nonumber
\end{align}
where $f_0, f_1,\ldots,f_s, g_0,g_1,\ldots, g_t\in A$ and each $F_u$ and each $G_v$ is a nonempty product of distinct functions of the form $p_f$ for $f\in \sf$.

By Lemma~\ref{key-trick}, $Tf=f_0$ and $Tg=g_0$, so for each $r=0,1,2,\ldots$, 
\begin{align}
D^{(r)}f &=(d^{(r)}\circ T)(f) = d^{(r)}f_0\nonumber\\ 
\intertext{and}
D^{(r)}g &=(d^{(r)}\circ T)(g) = d^{(r)}g_0.\nonumber
\end{align}
Since for each $k=1,2,\ldots$,
\begin{displaymath}
\dnonum(f_0g_0)=\sum_{j=0}^k (d^{(j)}f_0) (d^{(k-j)}g_0),
\end{displaymath}
the proof will be complete once we show that $D^{(r)}(fg)=d^{(r)}(f_0g_0)$ for each $r$.

View $fg$ as a sum of four terms:
\begin{align}
fg =\pi^*(f_0g_0) &+ \biggl(\sum_{u=1}^s \pi^*(f_ug_0) F_u\biggr)
+ \biggl(\sum_{v=1}^t \pi^*(f_0g_v) G_v\biggr)\nonumber\\
&+ \biggl(\sum_{u=1}^s\sum_{v=1}^t \pi^*(f_ug_v)F_uG_v\biggr).\nonumber
\end{align}
By Lemma~\ref{key-trick},
\begin{equation}\label{part1}
T\bigl(\pi^*(f_0g_0)\bigr)=f_0g_0
\end{equation}
\begin{equation}\label{part2}
T\biggl(\sum_{u=1}^s \pi^*(f_ug_0) F_u\biggr)=0
\end{equation}
\begin{equation}\label{part3}
T\biggl(\sum_{v=1}^t \pi^*(f_0g_v) G_v\biggr)=0.
\end{equation}
Now for fixed $u$ and $v$, consider $T\bigl(\pi^*(f_ug_v)F_uG_v\bigr)$.
We have $F_u=p_{f_1}\cdots p_{f_a}$ and $G_v=p_{g_1}\cdots p_{g_b}$ where $f_1,\ldots,f_a$ are distinct elements of $\sf$ and $g_1,\ldots, g_b$ are also distinct elements of $\sf$.  Note that each of the sets $\{ f_1,\ldots, f_a\}$ and $\{g_1,\ldots, g_b\}$ is necessarily nonempty. If $\{ f_1,\ldots, f_a\}=\{g_1,\ldots, g_b\}$, then $F_uG_v=p_{f_1}^2\cdots p_{f_a}^2=\pi^*(f_1\cdots f_a)$, and hence
\[
(d^{(r)}\circ T)\bigl(\pi^*(f_ug_v)F_uG_v\bigr) = (d^{(r)}\circ T)\bigl(\pi^*(f_ug_v f_1\cdots f_a)\bigr) =  d^{(r)}(f_ug_v f_1\cdots f_a);
\]
the last quantity above is zero because $f_1,\ldots, f_a$ belong to the ideal $\ker d$.  If instead $\{f_1,\ldots, f_a\}\neq \{g_1,\ldots, g_b\}$, then $F_uG_v$ can be expressed as the product of a possibly empty set of elements of $\pi^*(A)$ and a \emph{nonempty} set of functions $p_{h_1},\ldots, p_{h_c}$ with $h_1,\ldots, h_c\in \{f_1,\ldots, f_a, g_1,\ldots, g_b\}$; consequently,
$T\bigl(\pi^*(f_ug_v)F_uG_v\bigr)=0$ by Lemma~\ref{key-trick}(iii).  We conclude that 
\begin{equation}\label{part4}
(d^{(r)}\circ T)\biggl(\sum_{u=1}^s\sum_{v=1}^t \pi^*(f_ug_v)F_uG_v\biggr)=0.
\end{equation}
Collectively, equations (\ref{part1})--(\ref{part4}) yield that
\[
D^{(r)}(fg) =  (d^{(r)}\circ T)(fg) =d^{(r)}(f_0g_0),
\]
as desired.
\epf

\begin{theorem}\label{main-construction}
Let $A$ be a uniform algebra on a compact Hausdorff space $Y$, and let $x_0$ be a point of $\ma$. Suppose that there is a nondegnerate bounded point derivation $d=\der$ of order $n$ with $1\leq n \leq \infty$ on $A$ at $x_0$. Then there exists a uniform algebra $A^D$ on a compact Hausdorff space $Y^D$ and a continuous surjective map $\pi:\mad\rightarrow \ma$ such that 
\itemskip
\begin{enumerate}
\item[(i)] $\pi(Y^D)=Y$.
\itemskip
\item[(ii)] The formula $\pi^*(f)=f\circ \pi$ defines an isometric embedding of $A$ into $A^D$.
\itemskip
\item[(iii)] $\pi^{-1}(x_0)$ consists of a single point which we denote by $x_\omega$.
\itemskip
\item[(iv)]There is a nondegenerate bounded point derivation $D=\Der$ on $A^D$ at $x_\omega$ that satisfies the equation $D^{(k)}\circ \pi^*=d^{(k)}$.
\itemskip
\item[(v)] There is a dense subset $\sf$ of $\ker D$ such that every member of $\sf$ has a square root in $\sf$.  If the algebra $A$ is normal, then $\sf$ can be chosen so that for every point $y\in Y^D\sm\{x_\omega\}$ and every compact subset $E$ of $Y^D\sm \{y\}$, there exists a neighborhood $U$ of $y$ and a function $f\in\sf$ such that $f|U=1$ and $f|E=0$.
\end{enumerate}
If the maximal ideal space of $A$ is metrizable, then $A^D$ can be chosen so that its maximal ideal space is metrizable as well.  If $A$ is normal, then $A^D$ can be chosen so as to be normal as well.
\ethm

\bpf
Let $\Sigma_0=\ma$, and let $A_0$ denote $A$ regarded as a uniform algebra on $\Sigma_0$.  Set $d_0=d$.  Let $\sf_0$ be a dense subset of $\ker d$.  If $\ma$ is metrizable, choose $\sf_0$ to be countable.  (If $\ma$ is nonmetrizable, one can take $\sf_0=\ker d$.)  If $A$ is normal, then applying Lemma~\ref{localness} shows that we can, and therefore we shall, choose $\sf_0$ such that for every point $y\in \Sigma_0\sm\{x_0\}$ and every compact subset $E$ of $\Sigma_0\sm\{y\}$ there exists a neighborhood $U$ of $y$ and a function $f\in \sf_0$ such that $f|U=1$ and $f|E=0$.  Now form the $\sf_0$-extension of $A_0$.  Denote the resulting uniform algebra by $A_1$, the space on which $A_1$ is defined by $\Sigma_1$, and the canonical map $\Sigma_1\ra\Sigma_0$ by $\pi_1$.  Recall that then $\pi_1^{-1}(x_0)$ consists of a single point; denote that point by $x_1$.  If $\Sigma_0$ is metrizable, then $\Sigma_1$ is metrizable.  By Lemma~\ref{key-lemma}, there is a nondegenerate bounded point derivation $d_1=\dernum 1$ of order $n$ on $A_1$ at $x_1$ such that $\dnum 1\circ\pi_1^*=\dnum 0$ and $\|\dnum 1\|=\|\dnum 0\|$ for all $k=0,1,2,\ldots$.  By Lemma~\ref{preserve-normal1}, if $A_0$ is normal, then so is $A_1$.

We then iterate the process of taking root extensions to obtain a sequence 
$\{(A_m,\Sigma_m, \pi_m, x_m, d_m, \sf_m)\}_{m=0}^\infty$, where each $A_m$ is a uniform algebra on $\Sigma_m$, if $\ma$ is metrizable so is each $\Sigma_m$, each $\pi_m:\Sigma_m\rightarrow \Sigma_{m-1}$ is a surjective continuous map, $x_m=\pi_{m}^{-1}(x_{m-1})$, $d_m=\dernum m$ is a nondegenerate bounded point derivation of order $n$ on $A_m$ at $x_m$ such that $\dnum m \circ \pi_m^* = \dnum {m-1}$ and $\| \dnum m \| = \| \dnum {m-1} \|$ for all $k=0,1,2,\ldots$, and each $\sf_m$ is a dense subset of $\ker d_m$ such that for every $f\in \sf_m$ the function $f\circ \pi_{m+1}$ is the square of a function in $\sf_{m+1}$, and if $\ma$ is metrizable, then $\sf_m$ is countable; furthermore, if $A$ is normal, then $A_m$ is normal and $\sf_{m}$ is such that for every point $y\in \Sigma_m\sm\{x_m\}$ and every compact subset $E$ of $\Sigma_m\sm\{y\}$ there exists a neighborhood $U$ of $y$ and a function $f\in \sf_m$ such that $f|U=1$ and $f|E=0$. Finally we take the inverse limit of the system of uniform algebras $\{A_m\}$.  Explicitly, we set 
$$\Sigma_\omega=\Bigl\{(y_j)_{j=0}^\infty\in \textstyle\prod\limits_{j=0}^\infty \Sigma_j: \pi_{m+1}(y_{m+1})=y_m  \hbox{\ for all } m=0, 1, 2, \ldots\Bigr\},$$
and letting $q_m:\Sigma_\omega\rightarrow \Sigma_m$ be the restriction of the canonical projection $\prod_{j=0}^\infty \Sigma_j \rightarrow \Sigma_m$, we let $A_\omega$ be the closure of $\bigcup_{m=0}^\infty \{h\circ q_m: h\in A_m\}$ in $C(\Sigma_\omega)$.  Set $x_\omega=(x_m)_{m=0}^\infty$.  Set $\pi=q_0$.  Then $\pi^{-1}(x_0)=x_\omega$.

Note that for each $m=0,1,2,\ldots$, the formula $\qstar m(f)=f\circ q_m$ defines an isometric embedding of $A_m$ into $A_\omega$, and $A_\omega$ is the closure of $\bigcup_{m=0}^\infty \qstar m(A_m)$.  Observe that $\qstar {m+1} \circ \pi_{m+1}^*=\qstar m$, and hence $\qstar m(A_m)\subset \qstar {m+1}(A_{m+1})$.
Define $\dtnum m$ ($k=0,1,2,\ldots$) on $\qstar m(A_m)$ by
\[
\dtnum m\bigl(\qstar m(h)\bigr)=\dnum m(h) \quad \hbox{for all $h\in A_m$}.
\]
Then one easily checks that $\dtnum {m+1}$ agrees with $\dtnum {m}$ on $\qstar {m}(A_{m})$ for each $m$ and $k$.  Thus, for each fixed $k$, the union of the functionals $\dtnum m$, $m=0,1,2,\ldots$, yields a well-defined linear functional $\tilde d^{(k)}$ on $\bigcup_{m=0}^\infty \qstar m(A_m)$.  Because the functionals $\dtnum m$, $m=0,1,2,\ldots$, all have the same norm $\|\dtnum 0\|$, the functional $\tilde d^{(k)}$ is bounded, and hence extends to a bounded linear functional $D^{(k)}$ on $A_\omega$.  Moreover, $D=(D^{(k)})_{k=0}^n$ is a nondegenerate bounded point derivation of order $n$ on $A_\omega$ at $x_\omega$.

By \cite[Theorem~2.3]{Cole}, the maximal ideal space of $A_\omega$ is $\Sigma_\omega$, and the inverse image under $\pi$ of the Shilov boundary for $A_0$ is the Shilov boundary for $A_\omega$.  Consequently, setting $Y^D=\pi^{-1}(Y)$ and  $A^D$ equal to the restriction algebra $A_\omega|Y^D$, we have that $A^D$ is a uniform algebra isometrically isomorphic to $A_\omega$. 
Obviously we can regard the derivation $D$ as a derivation on $A^D$.
Set $\sf=\bigcup_{m=0}^\infty \qstar m(\sf_m)$.  As the reader can verify, conditions (i)--(v) all hold.

Note that if $\Sigma_0$ is metrizable, then so is $\Sigma_\omega$.
If $A$ is normal, then so is $A^D$ by Lemma~\ref{preserve-normal2}.
\epf

\begin{lemma}\label{lemma}
Let $A$ be a uniform algebra, let $J$ be a primary ideal in $A$, and let $E=\{\, f^2: f\in J\, \}$.  If $E$ is dense in $J$, then each point of $\ma$ is a one-point Gleason part.
\end{lemma}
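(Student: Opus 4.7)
\emph{Plan.}  Fix distinct $\phi,\psi\in\ma$ and let $x$ be the unique point with $J\subset M_x$.  I will show $\phi\not\sim\psi$ by producing functions $h_n\in A$ with $\phi(h_n)=0$, $\|h_n\|\to 1$, and $|\psi(h_n)|\to 1$; Lemma~\ref{samepart} (applied either directly or after swapping $\phi$ and $\psi$, by symmetry of the Gleason relation) will then give the conclusion.  The $h_n$ will come from iterated approximate $2^n$-th roots, inside $J$, of a well-chosen $f\in J$.

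\emph{Choice of $f$.}  After possibly swapping $\phi$ and $\psi$, I claim there exists $f\in J$ with $\|f\|\leq 1$, $\phi(f)=0$, and $\psi(f)\neq 0$.  If $\phi=x$, any $f\in J$ on which $\psi$ does not vanish works, and such $f$ exists because primariness of $J$ at $x$ together with $\psi\neq x$ forces $J\not\subset M_\psi$.  If $\psi=x$, swap roles.  If neither equals $x$, then $\phi|_J$ and $\psi|_J$ are both nonzero; were $\psi$ to vanish on the codimension-one subideal $J\cap M_\phi$ of $J$, the restriction $\psi|_J$ would be a scalar multiple of $\phi|_J$, and multiplicativity of $\phi$ and $\psi$ applied first to products $fg$ with $f,g\in J$ (forcing the scalar to be $1$) and then to products $fg$ with $f\in J$, $\phi(f)\neq 0$, and $g\in A$ (forcing $\psi(g)=\phi(g)$ on all of $A$) would give $\psi=\phi$, contradicting $\phi\neq\psi$.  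So $\psi$ is nonzero on $J\cap M_\phi$, supplying the desired $f$.

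\emph{Iteration.}  Given such $f$ and any $n\in\N$ and $\eta>0$, set $g_0=f$ and, invoking density of $E$ in $J$ at each stage, choose $g_k\in J$ with $\|g_k^2-g_{k-1}\|<\vep_k$.  Telescoping via $\|a^m-b^m\|\leq mM^{m-1}\|a-b\|$ (with $M$ a uniform bound on the $\|g_k\|$, secured by keeping all $\vep_k\leq 1$) bounds $\|g_n^{2^n}-f\|$ by $\sum_{k=1}^n 2^{k-1}M^{2^{k-1}-1}\vep_k$, which is made less than $\eta$ by choosing the $\vep_k$ sufficiently small.  Setting $f_n:=g_n$ and exploiting the identity $\|g^{2^n}\|=\|g\|^{2^n}$ valid in any uniform algebra, the choice $\eta=\eta_n$ with $\eta_n^{1/2^n}\to 0$ (say $\eta_n=2^{-n\cdot 2^n}$) yields $\|f_n\|=\|f_n^{2^n}\|^{1/2^n}\to 1$, $|\phi(f_n)|\leq\eta_n^{1/2^n}\to 0$, and $|\psi(f_n)|\geq(|\psi(f)|-\eta_n)^{1/2^n}\to 1$.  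Then $h_n:=f_n-\phi(f_n)\cdot 1$ satisfies $\phi(h_n)=0$, $\|h_n\|\leq\|f_n\|+|\phi(f_n)|\to 1$, and $|\psi(h_n)|\geq|\psi(f_n)|-|\phi(f_n)|\to 1$, so after normalization Lemma~\ref{samepart} yields $\phi\not\sim\psi$.

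\emph{Main obstacle.}  The crux is the quantitative iteration: the telescoping introduces factors $M^{2^{k-1}-1}$ that grow doubly exponentially, and in order to force $|\phi(f_n)|\to 0$ rather than merely $|\phi(f_n)|\leq 1$, the cumulative error $\eta_n$ must decay faster than $e^{-c\cdot 2^n}$.  Nothing in the hypothesis restricts how small $\vep_k$ may be chosen, so density of $E$ suffices; the art is the bookkeeping.  The algebraic separation in the case $\phi,\psi\neq x$ is a minor secondary subtlety.
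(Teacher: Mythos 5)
Your proof is correct and follows essentially the same strategy as the paper's: choose $f\in J$ separating the two points, iterate approximate square roots inside $J$ (using density of $E$), shift by the constant $\phi(f_n)$ to force vanishing at $\phi$, and exploit $|\psi(f)|^{1/2^n}\to 1$ together with Lemma~\ref{samepart}. One remark: in the case $\phi,\psi\neq x$ the paper obtains the separating $f$ more cheaply by picking any $f\in J$ with $\psi(f)\neq 0$ (available by primariness) and multiplying by an element of $A$ vanishing at $\phi$ but not at $\psi$ — the product stays in the ideal $J$ — which avoids your codimension-one/linear-functional argument entirely, though your argument is also valid.
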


\begin{proof}
The proof is similar to the proof of \cite[Lemma~1.1~(i)]{Cole}.  
Let $x$ be the point of $\ma$ such that $J$ is contained in $M_x$.
Let $y\in \ma\sm\{x\}$ be arbitrary, and let $z$ be an arbitrary element of $\ma$ distinct from $y$.  By hypothesis there is a function $f$ in $J$ such that $f(y)\neq 0$.  By multiplying $f$ by a function in $A$ that vanishes at $z$ but not at $y$ and rescaling, we may assume in addition that $f(z)=0$ and $\|f\|<1$.  Since $E$ is dense in $J$, for each $n\in\N$ and $\vep>0$, there exist functions $f_1,\ldots, f_n$ in $J$ such that
$\|f-f_1^2\|<\varepsilon, \ldots, \|f_{n-1}-f_n^2\|<\varepsilon$.  Choosing $\varepsilon>0$ small enough, $f_n^{2^n}$ can be made arbitrarily close to $f$.   
Consideration of the function $f_n-f_n(z)$ then shows that there is a function $g$ in $A$ such that $g(z)=0$ and $g^{2^n}$ is arbitrarily close to $f$.  Since $|f(y)|^{2^{-n}}\ra 1$, choosing $n$ large enough $|g(y)|$ can be made arbitrarily close to $1$.  
Thus $y$ and $z$ lie in different Gleason parts by Lemma~\ref{samepart}
\end{proof}

\begin{proof}[Proof of Theorems~\ref{main-theorem-d1} and~\ref{main-theorem-d2}]
To prove Theorem~\ref{main-theorem-d1} we start with any uniform algebra $A$ that is defined on a compact metrizable space and has a nonzero bounded point derivation of order $1$.  For instance, take $A$ to be the disc algebra.  Then taking $B$ to be the uniform algebra $A^D$ given by Theorem~\ref{main-construction} yields the result.  That $B$ has no nontrivial Gleason parts is a consequence of condition (v) by Lemma~\ref{lemma}.

To prove the stronger Theorem~\ref{main-theorem-d2} we impose on our starting uniform algebra $A$ the additional requirements that $A$ be normal and that there exist a nondegenerate bounded point derivation of infinite order on $A$.  An example of a uniform algebra satisfying these requirements was given by Anthony O'Farrell \cite{OF}.  Now take $B$ to be the uniform algebra $A^D$ given by Theorem~\ref{main-construction} choosing $\sf$ as discussed in condition~(v).  The assertion in Theorem~\ref{main-theorem-d2} about bounded relative units then holds by Lemma~\ref{F-H}.
\end{proof}

%
%
%
%

\section{Nontrivial Gleason parts in the absence of\\ nonzero point derivations}
\label{p}

In this section we prove Theorems~\ref{main-theorem-p1}, \ref{main-theorem-p2}, and~\ref{main-theorem-p3}.  The uniform algebra we give with the properties in Theorem~\ref{main-theorem-p3} is essentially the uniform algebra constructed by Feinstein in \cite{F2}.  The uniform algebra constructed in the proofs of Theorem~\ref{main-theorem-p1} and~\ref{main-theorem-p2} is a modification of that uniform algebra.

\bpf[Proof of Theorems~\ref{main-theorem-p1}~and~\ref{main-theorem-p2}]
Start with any normal uniform algebra $A$ that has a Gleason part with at least $n$ points.  For instance take $A$ to be the normal uniform algebra constructed by Robert McKissick \cite{McK} (which is also presented in \cite[Section~27]{S1}).
Set $(A_0, X_0)=(A, \ma)$ and let $x_0^{(1)},\ldots, x_0^{(n)}$ be $n$ distinct points belonging to a common Gleason part for $A_0$.  Let $\Omega$ denote the first uncountable ordinal.

Recall that when forming the $\sf$-extension of a uniform algebra $A$ on a space $X$, if $\sf$ is contained in $M_x$ for a point $x\in X$, then $\pi^{-1}(x)$ consists of a single point.  Consequently, one easily sees that there is a system of root extensions $\bigl(\{\aa\}, \{\xa\}, \{\piab\}\bigr)$ $(0\leq \alpha\leq\beta\leq\tau)$ such that
\begin{enumerate}
\item[(i)] for each $0\leq \gamma\leq \Omega$ there are distinct points $x_\gamma^{(1)},\ldots, x_\gamma^{(n)}$ in $X_\gamma$ such that $\pi^{-1}_{\alpha,\beta}(x_\alpha^{(j)})= x_\beta^{(j)}$ for each $1\leq j \leq n$, and
$0\leq \alpha\leq\beta\leq\Omega$, and
\item[(ii)] for each $0\leq\alpha<\Omega$, the pair $(A_{\alpha+1},X_{\alpha+1})$ is the $\sf_\alpha$-extension of $(A_\alpha, X_\alpha)$ with $\sf_\alpha= M_{x_\alpha^{(1)}} \cap \cdots \cap M_{x_\alpha^{(n)}}$.
\end{enumerate}
Set $\sf=M_{x_\Omega^{(1)}} \cap \cdots \cap M_{x_\Omega^{(n)}}$, and note that $\sf=\bigcup\limits_{0\leq \alpha<\Omega} \sf_\alpha$.  Consequently, every function in $\sf$ has a square root in $\sf$.
For notational convenience, set $\tA=A_\Omega$, set $x_j=x^{(j)}_\Omega$ ($j=1,\ldots, n$),  and set $P=\{x_1,\ldots,x_n\}$.

By Lemma~\ref{preserve-normal2}, $\tA$ is normal.  Applying Lemma~\ref{F-H} then yields that $\tA$ has bounded relative units at every point of $X\sm P$.  Therefore, by Lemma~\ref{bru-implies-gen-peak-point}, every point of $X\sm P$ is a generalized peak point, and hence, is a one-point Gleason part.  Consequently, to show that $P$ is a Gleason part, it is enough to show that each pair of points of $P$ lie in a common Gleason part.  
For that, note that given $h\in \tA$ with $\|h\|\leq 1$, the function $T_\Omega h$ is in $A$ with $\|T_\Omega h\|\leq 1$, and so given $1\leq j,k\leq n$,
\begin{align}
\bigl | h(x_j)-h(x_k) \bigr | &= \bigl | (T_\Omega h)(x_0^{(j)}) - (T_\Omega h)(x_0^{(k)}) \bigr | \nonumber\\
&\leq \| x_0^{(j)} - x_0^{(k)} \|;\nonumber
\end{align}
consequently $\| x_j -x_k \| \leq \| x_0^{(j)} - x_0^{(k)} \|$, and hence, $x_j$ and $x_k$ lie in a common Gleason part.

There are no nonzero point derivations on $\tA$ at points of $X\sm P$ because each of these points is a generalized peak point.  To show that there are no nonzero point derivations on $\tA$ at points of $P$, we consider a point $y\in P$ and a function $f\in M_y$ and will show that $f$ is in $M_y^2$.  There exists $g\in \tA$ such that $g(x_j)^2=f(x_j)$ for $j=1,\ldots, n$.  Note that $g$ is in $M_y$.  Clearly $f-g^2$ is in $\sf$, so there exists $h\in \sf$ such that $f-g^2=h^2$.  Then $h$ is in $M_y$, and hence so are $g\pm ih$.  Since $f=(g+ih)(g-ih)$ we obtain that $f$ is in $M_y^2$, as desired.

We have shown that $\tA$ satisfies the properties required of the algebra $B$ in Theorem~\ref{main-theorem-p2} with strong regularity replaced by the weaker condition of normality.  In particular, Theorem~\ref{main-theorem-p1} is proved.

To obtain a strongly regular uniform algebra, set $B=A_{x_1}\cap\cdots\cap A_{x_n}$, where we are using the notation introduced in the paragraph preceding Lemma~\ref{4.1}.  By Lemma~\ref{strong-reg}(ii), $B$ has bounded relative units at each point of $X\sm P$, and hence is strongly regular at each point of $X\sm P$ by Lemma~\ref{bru-implies-gen-peak-point}.  Moreover, by Lemma~\ref{strong-reg}(i), $B$ is also strongly regular at each point of $P$ as well.  Because $B$ is a subalgebra of $\tA$, points in a common Gleason part for $\tA$ must also lie in a common Gleason part for $B$.  In the present situation, that implies that the Gleason parts for $B$ coincide with the Gleason parts for $\tA$.  To show that $B$ has no nonzero point derivations, we argue just as we did for $\tA$ noting that the functions involved in the factorization $f=(g+ih)(g-ih)$ for $f$ in the ideal $M_y$ of $B$ can be chosen to lie in $B$.
\epf

\bpf[Proof of Theorem~\ref{main-theorem-p3}]
The proof is essentially the same as the one just given except that instead of using a root system with index set the ordinals less than or equal to $\Omega$, we simply form an infinite sequence of uniform algebras and take a direct limit once (as we did in the proof of Theorem~\ref{main-theorem-d1}), and rather than set $\sf_\alpha= M_{x_\alpha^{(1)}} \cap \cdots \cap M_{x_\alpha^{(n)}}$ we take $\sf_\alpha$ to be a \emph{countable dense subset} of 
$M_{x_\alpha^{(1)}} \cap \cdots \cap M_{x_\alpha^{(n)}}$.
\epf

\medskip
\centerline{\textsc{Acknowledgments}}
\smallskip
We have already expressed our gratitude to Dales and Feinstein for allowing us to include their work in the paper.
In addition, we thank them for valuable discussions and correspondence.

Some of the work presented here was carried out while the second author was a visitor at Indiana University.  He thanks the Department of Mathematics for its hospitality.

\end{document}